\DeclareMathOperator{\nullspace}{null}
\DeclareMathOperator{\sign}{sgn}
\newtheorem{lemma}{Lemma}
\newtheorem{corollary}{Corollary}
\newtheorem{theorem}{Theorem}
\newtheorem{definition}{Definition}
\newtheorem{assumption}{Assumption}
\theoremstyle{remark}\newtheorem{remark}{Remark}
\newcommand \bn{\mathbf{n}}
\newcommand \bx{\mathbf{x}}
\newcommand \bq{\mathbf{q}}
\newcommand \bA{\mathbf{A}}
\newcommand \bzero{\boldsymbol{0}}
\newcommand \tbphi{\tilde{\boldsymbol{\phi}}}
\newcommand \tbpsi{\tilde{\boldsymbol{\psi}}}
\newcommand \hbphi{\hat{\boldsymbol{\phi}}}
\newcommand \hbpsi{\hat{\boldsymbol{\psi}}}
\newcommand \mcC{\mathcal{C}}
\newcommand \mcG{\mathcal{G}}
\newcommand \mcM{\mathcal{M}}
\newcommand \mcN{\mathcal{N}}
\newcommand \mcP{\mathcal{P}}
\newcommand \mcS{\mathcal{S}}
\newcommand \mcT{\mathcal{T}}
\newcommand \bmcP{\bar{\mathcal{P}}}
\newcommand \bphi{\boldsymbol{\phi}}
\newcommand \bpsi{\boldsymbol{\psi}}
\title{\LARGE \bf Natural Gas Flow Equations: Uniqueness and an MI-SOCP Solver}
\author{Manish K. Singh and Vassilis Kekatos% <-this % stops a space
%\thanks{This research was supported by the U.S. National Science Foundation under Grant 1711587. The authors are with the Bradley Dept. of ECE, Virginia Tech, Blacksburg, VA 24061, USA. Emails:\{manishks,kekatos\}@vt.edu.}%
}
\begin{document}

\maketitle
\thispagestyle{empty}
\pagestyle{empty}

\begin{abstract}
The critical role of gas fired-plants to compensate renewable generation has increased the operational variability in natural gas networks (GN). Towards developing more reliable and efficient computational tools for GN monitoring, control, and planning, this work considers the task of solving the nonlinear equations governing steady-state flows and pressures in GNs. It is first shown that if the gas flow equations are feasible, they enjoy a unique solution. To the best of our knowledge, this is the first result proving uniqueness of the steady-state gas flow solution over the entire feasible domain of gas injections. To find this solution, we put forth a mixed-integer second-order cone program (MI-SOCP)-based solver relying on a relaxation of the gas flow equations. This relaxation is provably exact under specific network topologies. Unlike existing alternatives, the devised solver does not need proper initialization or knowing the gas flow directions beforehand, and can handle gas networks with compressors. Numerical tests on tree and meshed networks with random gas injections indicate that the relaxation is exact even when the derived conditions are not met.
\end{abstract}

\begin{keywords}
Gas flow equations, convex relaxation, second-order cone constraints, uniqueness.
\end{keywords}

%%%%%%%%%%%%%%%%%%%%%%%%%%%%%%%%%%%%%%%%%%%%%%%%%%%%%%%%%%%%%%%%%%
\section{Introduction}
Natural gas has been a critical energy source for decades with uses across the residential, commercial, industrial, and electric generation sectors~\cite{mercado2015review}. The importance of natural gas in the energy sector has further increased, and the same is anticipated in future. The main reasons for increasing emphasis on natural gas include the discovery of substantial new supplies of natural gas in the U.S., and its recognition as a clean low-carbon solution to meet rising energy demands~\cite{MIT}. The thrust for renewable energy in power sector demands for technical solutions to handle the high variability, intermittency, and uncertainty involved with wind and solar generations. Natural gas has come up as an economically viable and low-carbon solution to the said problem because of high ramping abilities of natural gas-fired generators~\cite{MIT}.

The primary transportation mode for natural gas is through a large continent-wide network of pipelines~\cite{mercado2015review}. Along a pipeline, pressure drops in the direction of flow due to friction. Gas contracts necessitate the operators to maintain a minimum pressure at consumer nodes. Therefore, compressors are placed on some pipelines to increase the pressure at the output. Given gas injection/withdraws, operators need to solve the \emph{gas flow} (GF) equations, a set of nonlinear equations governing the distribution of gas flows and nodal pressures~\cite{abhi2017GM}. The increasing variability in gas withdrawals by gas-fired generators, the complex interdependence of gas-electric infrastructure, and increased focus on reliability, they all motivate well efficient GF solvers~\cite{esquivel2012unified}. Solving the GF problem is hard for non-tree networks even under-steady state and balanced conditions \cite{Dvijotham15}.

The GF problem is typically solved using the Newton-Raphson (NR) scheme, though its convergence is conditioned on proper initialization~\cite{esquivel2012unified}. A semidefinite program (SDP)-based GF solver, attaining a higher success probability than the NR scheme, is developed in \cite{abhi2017GM}. Nevertheless, the SDP-based solver fails to solve the GF problem if the network state is far from the states considered in designing the solver. The necessity of good initialization for the NR scheme and suitable design points for SDP-based solver limit their suitability for reliability studies. For simpler networks without compressors, the flows and pressures are the optimal primal-dual solutions of a convex minimization~\cite{wolf2000energy}. Broadening the scope to tree networks, a GF solver based on the monotone operator theory has been developed in~\cite{Dvijotham15}. The latter applies to meshed GNs presuming that the directions of gas flows are given. Reference~\cite{Dvijotham15} establishes also the uniqueness of a GF solution but only within a monotonicity domain; still this domain is hard to characterize for non-tree networks or networks with compressors.

The contribution of this work is on three fronts: First, Section~\ref{sec:unique} proves that the nonlinear steady-state GF equations enjoy a unique solution, if a solution exists. To the best of our knowledge, this is the first claim corroborating the numerical observations of~\cite{Dvijotham15}. Second, Section~\ref{subsec:cr:problem} puts forth an MI-SOCP-based GF solver. The GF task is posed as a minimization problem where flow directions are captured by binary variables; the nonlinear GF equations are relaxed to second-order cone constraints; and a judiciously designed function is appended in the objective. Thanks to the latter, the third contribution is to show that the relaxation is exact if there are no compressors in cycles and every pipe belongs to at most one cycle. Numerical tests on tree and meshed networks demonstrate that the devised solver finds the unique GF solution even when the assumed conditions are not met. 

\emph{Notation}: lower- (upper-) case boldface letters denote column vectors (matrices). Calligraphic symbols are reserved for sets. Symbol $^{\top}$ stands for transposition, and $\odot$ denotes entry-wise multiplication between vectors. Vectors $\mathbf{0}$ and $\mathbf{1}$ are the all-zero and all-one vectors. The sign function $\sign(x)$ returns $+1$ if $x>0$; $-1$ if $x<0$; and $0$ otherwise. 

%%%%%%%%%%%%%%%%%%%%%%%%%%%%%%%%%%%%%%%%%%%%%%%%%%%%%%
\section{Gas Flow Problem}\label{sec:model}
Consider a natural gas network (GN) modeled by a directed graph $\mathcal{G}=(\mcN,\mcP)$. The graph vertices $\mcN=\{1,\cdots,N\}$ model nodes where gas is injected or withdrawn from the network, or simple junctions. The graph edges $\mcP=\{1,\ldots,P\}$ correspond to gas pipelines connecting two nodes. Let $p_n>0$ be the gas pressure at node $n$ for all $n\in\mcN$. One of the nodes (conventionally one hosting a large gas producer) is selected as the reference node. The reference node is indexed by $r$, and its pressure is fixed to a known value $p_r$. The gas injection $q_n$ at node $n\in\mcN$ is positive for an injection node; negative for a withdrawal node; and zero for junction nodes. 

Without loss of generality, edges are assigned an arbitrary direction denoted by $\ell=(m,n)\in\mcP$ with $m$ and $n\in\mcN$. The gas flow $\phi_\ell$ on pipeline $\ell=(m,n)\in\mcP$ is positive when gas flows from node $m$ to node $n$, and negative, otherwise. Conservation of mass implies that for all $n\in\mcN$
	\begin{equation}\label{eq:mc}
	q_n = \sum_{\ell:(n,k) \in \mcP}\phi_\ell - \sum_{\ell:(k,n) \in \mcP}\phi_\ell.
	\end{equation}
To express \eqref{eq:mc} in matrix-vector form, collect all gas injections in $\bq:=[q_1\dots q_N]^\top$ and edge flows in $\bphi:=[\phi_1\dots\phi_P]^\top$.The connectivity of the GN graph is captured by the $P\times N$ edge-node incidence matrix $\mathbf{A}$ with entries
	\begin{equation}\label{eq:A}
	A_{\ell,k}:=
	\begin{cases}
	+1&,~k=m\\
	-1&,~k=n\\
	0&,~\text{otherwise}
	\end{cases}~\forall~\ell=(m,n)\in\mcP.
	\end{equation}
The mass conservation in \eqref{eq:mc} may thus be expressed as
\begin{equation}\label{eq:mc2}
\bA^\top\bphi=\bq.
\end{equation}
Observe that $\bA\mathbf{1}=\mathbf{0}$ by definition, and so $\mathbf{1}^\top\bq=0$. The latter is intuitive since the network should be balanced under steady-state conditions. This also implies that \eqref{eq:mc} provides $N-1$ rather than $N$ independent linear equations on $\bphi$. 

For high- and medium-pressure networks, the pressure drop and energy loss across a pipeline are captured by a set of partial differential equations evolving across time and spatially along the pipeline length~\cite{Osiadacz87}, \cite{ThorleyTiley87}. Ignoring friction, pipeline tilt, and assuming time-invariant gas injections, this set of partial differential equations simplifies to the so termed Weymouth equation~\cite{Wu99}
\begin{equation}\label{eq:weymouth}
	p_m^{2} - p_n^{2} = a_\ell\phi_\ell |\phi_\ell|
\end{equation}
describing the pressure difference across the endpoints of pipeline $\ell=(m,n)\in\mcP$. The parameter $a_\ell>0$ depends on the physical properties of the pipeline~\cite{Osiadacz87}. The Weymouth equation asserts that pressure drops within a pipeline in the direction of gas flow. To be precise, the difference of squared pressures is proportional to the squared gas flow. To simplify notation, define the squared pressure at node ${n\in\mcN}$ as ${\psi_n:=p_n^2}$. Then, equation \eqref{eq:weymouth} can be written as
\begin{subequations}\label{eq:wey2}
	\begin{align}
	\psi_m-\psi_n&=a_\ell\sign(\phi_\ell)\phi_\ell^2\quad&&,~\forall \ell=(m,n)\in\mcP\label{seq:wey2a}\\
	\psi_n&\geq0&&,~\forall n\in\mcN.\label{seq:wey2b}
	\end{align}
\end{subequations}
By slightly abusing the terminology, we will oftentimes refer to $\psi_m$ as pressure rather than squared pressure, but the distinction will be clear from the context. 

To avoid unacceptably low or high pressures, GN operators install compressors at selected pipelines comprising the set $\mcP_a\subseteq \mcP$ with $P_a=|\mcP_a|$. Its complement set $\bmcP_a:=\mcP\setminus\mcP_a$ includes the remaining lossy pipelines satisfying \eqref{eq:weymouth}--\eqref{eq:wey2}. A compressor amplifies the squared pressure between its input and output by a ratio $\alpha_\ell$. Moreover, a compressor allows a unidirectional flow in the direction of compression. Therefore, compressor $\ell=(m,n)\in\mcP_a$ can be modeled as
\begin{subequations}\label{eq:comp}
	\begin{align}
	\psi_n&=\alpha_\ell\psi_m\label{seq:compa}\\
	\phi_\ell&\geq0.\label{seq:compb}
	\end{align}
\end{subequations}
Equation \eqref{eq:comp} assumes an ideal (lossless) compressor, that is $a_\ell=0$. This is wlog since an actual non-ideal compressor on pipe $(m,n)$ can be modeled by inserting an additional node $n'$ between nodes $m$ and $n$: Then, the lossless pipe $(m,n')$ hosts an ideal compressor and pipe $(n',n)$ is lossy. Both pipes have identical flows $\phi_{mn'}=\phi_{n'n}$.

Based on \eqref{eq:wey2}--\eqref{eq:comp}, it is not hard to verify that an NG network can be uniquely described by either the vector of nodal pressures $\bpsi$, or the vector of edge flows $\bphi$. 

\begin{lemma}\label{le:fp}
Given a reference squared pressure $\psi_r$ for some $r\in\mcN$, a pair $(\bphi,\bpsi)$ satisfying \eqref{eq:mc2}, \eqref{eq:wey2}, and \eqref{eq:comp}, is uniquely described by either $\bphi$ or $\bpsi$. 
\end{lemma}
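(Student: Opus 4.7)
The plan is to handle the two directions separately, and in each case argue uniqueness by assuming two valid pairs agree on one component and showing they must agree on the other. Throughout, the reference pressure $\psi_r$ and the injection vector $\bq$ implicit in \eqref{eq:mc2} are held fixed, and the graph $\mcG$ is taken to be connected.

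For the direction $\bphi\mapsto\bpsi$, I would suppose that $(\bphi,\bpsi_1)$ and $(\bphi,\bpsi_2)$ both satisfy \eqref{eq:wey2}--\eqref{eq:comp} and set $\bdelta:=\bpsi_1-\bpsi_2$. Subtracting \eqref{seq:wey2a} across any Weymouth edge $\ell=(m,n)\in\bmcP_a$ kills the flow-dependent right-hand side and leaves $\delta_m-\delta_n=0$; subtracting \eqref{seq:compa} across any compressor edge $\ell=(m,n)\in\mcP_a$ gives $\delta_n=\alpha_\ell\delta_m$. Since $\delta_r=0$ and the graph is connected, I would traverse a spanning tree rooted at $r$: each Weymouth edge preserves the current value of $\delta$, and each compressor edge scales it by the positive factor $\alpha_\ell$, so the zero value at $r$ propagates to every node. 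Hence $\bdelta=\mathbf{0}$, i.e., $\bpsi_1=\bpsi_2$.

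For the direction $\bpsi\mapsto\bphi$, I would first note that on any non-compressor edge the Weymouth law \eqref{seq:wey2a} is pointwise invertible: since $a_\ell>0$, the sign and magnitude of $\phi_\ell$ are both pinned by $\psi_m-\psi_n$, yielding $\phi_\ell=\sign(\psi_m-\psi_n)\sqrt{|\psi_m-\psi_n|/a_\ell}$. Consequently two candidate pairs $(\bphi_1,\bpsi)$ and $(\bphi_2,\bpsi)$ must coincide on every edge of $\bmcP_a$. To pin down the compressor flows I would call on mass conservation: subtracting the two copies of \eqref{eq:mc2} forces $\bA^\top(\bphi_1-\bphi_2)=\mathbf{0}$, and since $\bphi_1-\bphi_2$ is supported only on $\mcP_a$, this reduces to $\bA_C^\top\,\bd=\mathbf{0}$, where $\bA_C$ is the sub-incidence matrix restricted to compressor edges. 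Whenever the compressor subgraph is acyclic, $\bA_C$ has full column rank, so $\bd=\mathbf{0}$ and the compressor flows agree as well.

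The main obstacle is the last step, namely handling the compressor flows. In general, a nullspace vector $\bd$ of $\bA_C^\top$ corresponds to a circulation supported on a cycle of compressor edges; the unidirectionality constraint \eqref{seq:compb} and the fact that traversing a closed compressor cycle multiplies pressures by $\prod_{\ell\in\text{cycle}}\alpha_\ell$, which equals $1$ only in degenerate cases, should rule out such circulations. I expect the argument to either invoke the structural assumption that compressor edges form a forest—as is standard in physical gas networks—or to derive the same conclusion from incompatibility of a nontrivial compressor cycle with the pressure relation \eqref{seq:compa} and the sign constraint $\phi_\ell\geq 0$.
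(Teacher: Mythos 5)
Your proposal is correct, and it is in fact more than the paper offers: the paper dismisses Lemma~\ref{le:fp} with ``it is not hard to verify'' and never writes the argument down, so there is no official proof to compare against. Your two directions are exactly the right decomposition. The $\bphi\mapsto\bpsi$ direction (propagate $\psi_r$ along a spanning tree, with Weymouth edges preserving the difference of the two candidate pressure vectors and compressors scaling it by $\alpha_\ell>0$) is airtight. The $\bpsi\mapsto\bphi$ direction is also right: lossy-pipe flows are pinned by inverting \eqref{seq:wey2a}, and the residual freedom is a circulation in $\nullspace(\bA^\top)$ supported on compressor edges. The ``obstacle'' you flag at the end is real but is resolved by the paper's own modeling convention rather than by an extra assumption you would need to import: since every physical compressor is modeled as an ideal compressor $(m,n')$ followed by a lossy pipe $(n',n)$, the inserted node $n'$ has degree two with one lossy incident edge, so no cycle can consist entirely of compressor edges; the compressor-supported circulation must therefore vanish. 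The genuinely degenerate case you identify --- parallel \emph{ideal} compressors, where $\bphi$ is truly not recoverable from $\bpsi$ --- is conceded by the paper itself in Remark~\ref{re:parallel}, so the lemma is implicitly understood under the non-ideal-compressor convention. In short: your argument is complete once you invoke that convention in place of your hypothesized ``compressor edges form a forest'' assumption, and it supplies the proof the paper omits.
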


In fact, finding one of these two vectors constitutes the \emph{gas flow} (GF) problem formally described as follows. 

\begin{definition}
Given the pressure $\psi_r=p_r^2$ at the reference node; balanced nodal injections $\bq$; the compression ratios $\alpha_\ell$ for all compressors $\ell\in\mcP_a$; and the friction parameters $a_\ell$ for all lossy pipes $\ell\in\bmcP_a$, the GF problem aims at finding the nodal pressures $\bpsi$ and pipe flows $\bphi$ satisfying the GF equations \eqref{eq:mc2}, \eqref{eq:wey2}, and \eqref{eq:comp}. 
\end{definition}

The task involves $N-1+P$ equations over $N-1+P$ unknowns. The GF task can be posed as the feasibility task
\begin{align*}\label{eq:G1}
\mathrm{find}~&~\{\bphi,\boldsymbol{\psi}\}\tag{G1}\\
\mathrm{s.to}~&~ \eqref{eq:mc2},\eqref{eq:wey2},\eqref{eq:comp}.
\end{align*}
Albeit \eqref{eq:mc2} and \eqref{eq:comp} are linear, the Weymouth equation in \eqref{eq:wey2} is piecewise quadratic and non-convex. In addition, the requirement $\{\phi_\ell \geq 0\}_{\ell\in\mcP_a}$ could further complicate solving the GF equations. The GF task is typically solved using the Newton-Raphson's scheme, which converges only if initialized sufficiently close to a solution~\cite{Martinez11}. 
	
If the GN is a tree $(P=N-1)$, then $\mathbf{A}$ is full row-rank and \eqref{eq:mc2} is invertible. Once the pipe flows $\bphi$ are known, pressures $\boldsymbol{\psi}$ can be found through \eqref{eq:wey2}--\eqref{eq:comp}. In practice though, gas networks can exhibit non-radial structure~\cite{mercado2015review},~\cite{Wu99}. Therefore, solving the GF problem remains non-trivial. Before devising a new GF solver in Section~\ref{sec:cr}, the next section establishes that the GF equations enjoy a unique solution.

%%%%%%%%%%%%%%%%%%%%%%%%%%%%%%%%%%%%%%%%%%%%%%%%%%%%%%%%%%%%%%%%%%%%%%%%%%%%%%
\section{Uniqueness of the GF Solution}\label{sec:unique}
The analysis requires some concepts from graph theory, which are briefly reviewed next. A directed graph $\mcG=(\mcM, \mcP)$ is connected if there exists a sequence of adjacent edges between any two of its nodes. All graphs in this work are assumed to be connected. A minimal set of edges $\mcP_\mcT$ preserving the connectivity of a graph constitutes a \emph{spanning tree} of $\mcG$; is denoted by $\mcT:=(\mcM,\mcP_\mcT)$; and $|\mcP_\mcT|=|\mcM|-1$. The edges not belonging to a spanning tree $\mcT$ are referred to as \emph{links} with respect to $\mcT$. 

\begin{figure}[t]
	\centering
	\includegraphics[scale=0.65]{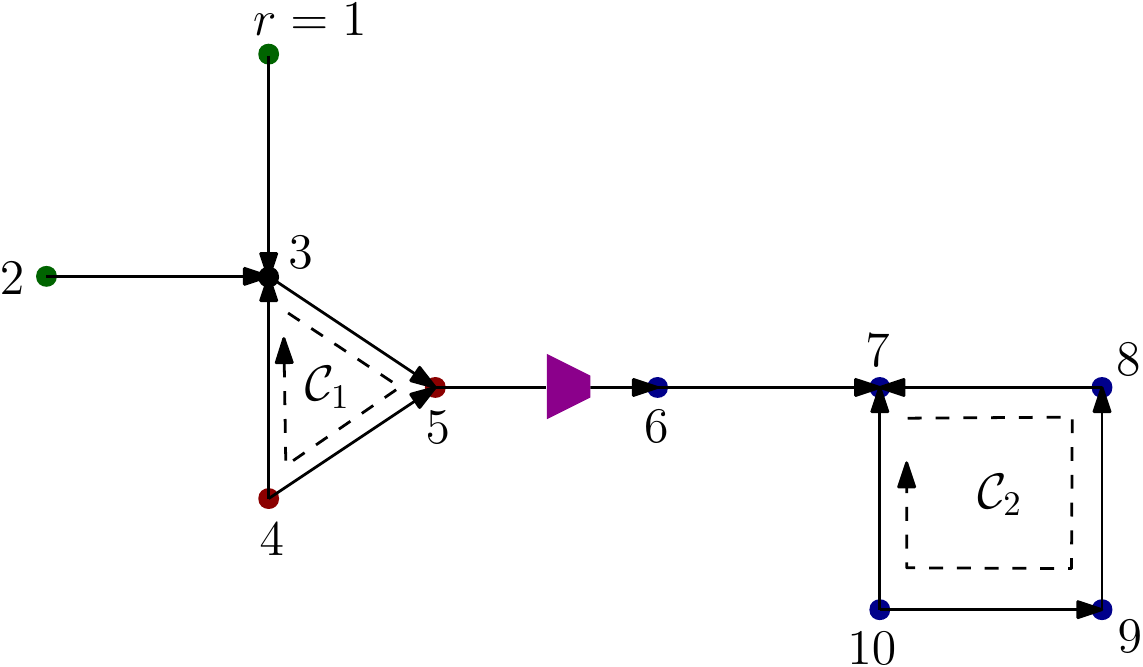}
	\caption{Example gas network with $11$ edges, $2$ cycles, and $1$ compressor.}
	\label{fig:example}
\end{figure}

A \emph{path} between nodes $m$ and $n$ is defined as a sequence of adjacent edges between the two nodes. If a path starts and ends at the same node (without edge or node repetition), it constitutes a \emph{cycle}. A \emph{tree} is a connected graph with no cycles. The statements `cycle $\mcC$ contains node $i$' or `node $i$ belongs to cycle $\mcC$' mean that there exists an edge in $\mcC$ that is incident to node $i$. For any cycle $\mcC$, we can select an arbitrary direction and define its indicator vector $\bn^\mcC$ whose $\ell$-th entry is
\begin{equation}\label{eq:nc}
n_{\ell}^\mcC:=
\begin{cases}
0&,~\text{if edge } \ell\notin \mcC\\
+1&,~\text{if direction of }\ell\text{ agrees with cycle direction}\\
-1&,~\text{otherwise.}\\
\end{cases}
\end{equation}
For example, the network of Fig.~\ref{fig:example} has $11$ edges and $2$ cycles. Based on the assigned direction for $\mcC_2$, the entries of $\bn^{\mcC_2}$ corresponding to edges $\left\{(7,8),(8,9),(9,10), (10,11)\right\}$ are $\{-1,-1,-1,+1\}$; the remaining entries are zero. Given a spanning tree, a \emph{fundamental cycle} is a cycle formed by adding a link to the spanning tree. 

After the graph preliminaries, we proceed with the uniqueness of the GF solution. The proof relies on the next result for a single-cycle graph, which is proved in the appendix.
 
\begin{lemma}\label{le:1cycle}
Consider a graph comprising a single cycle $\mcC$ over $k+1$ nodes indexed by $i=0,\dots, k$, and define $\bn^\mcC\in\mathbb{R}^k$ as the indicator vector for this cycle. For fixed squared pressure $\psi_0$, if two flow vectors $\bphi$ and $\tilde{\bphi}$ with $\tilde{\bphi}\neq \bphi$ satisfy \eqref{eq:wey2}--\eqref{eq:comp}, they cannot satisfy 
	\begin{align*}
	\sign(\tilde{\bphi}-\bphi)\odot\bn^\mcC&<\mathbf{0};~\textrm{or}~\\
	\sign(\tilde{\bphi}-\bphi)\odot\bn^\mcC&>\mathbf{0}
	\end{align*}
where the inequalities are understood entrywise. 	
\end{lemma}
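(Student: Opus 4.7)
The plan is to propagate the squared pressure around the cycle starting from node $0$ and exploit the fact that after one full loop $\psi$ must return to $\psi_0$; differencing the resulting closure condition between the two candidate solutions will produce a weighted sum of strictly monotone functions that must equal zero, while the hypothesized sign alignment in $\sign(\tilde{\bphi}-\bphi)\odot\bn^\mcC$ will force every surviving term to carry the same strict sign---the contradiction.

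First I would order the cycle nodes $0,1,\ldots,k$ along the chosen direction and let $\ell_i$ be the edge joining $i-1$ to $i$ (indices modulo $k+1$). Written in the cycle direction, a lossy pipe contributes an additive, strictly monotone update $\psi_i-\psi_{i-1}=-n_{\ell_i}^\mcC a_{\ell_i}\,g(\phi_{\ell_i})$ with $g(\phi):=\phi|\phi|$, whereas a compressor contributes a purely multiplicative, flow-independent update $\psi_i=\alpha_{\ell_i}^{n_{\ell_i}^\mcC}\psi_{i-1}$ (the exponent $\pm1$ accounting for traversing the compressor with or against its native direction). To reconcile these two kinds of updates I would introduce the positive scaling
\[
\beta_i:=\!\!\prod_{\substack{j\leq i\\ \ell_j\in\mcP_a}}\!\!\alpha_{\ell_j}^{n_{\ell_j}^\mcC},\qquad \tau_i:=\psi_i/\beta_i,
\]
under which compressor steps become the identity on $\tau$ while lossy steps satisfy $\tau_i-\tau_{i-1}=-n_{\ell_i}^\mcC a_{\ell_i}g(\phi_{\ell_i})/\beta_i$. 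Telescoping around the cycle and imposing $\psi_{k+1}=\psi_0$ yields the cycle-closure identity
\[
\sum_{\ell_i\in\bmcP_a\cap\mcC}\frac{n_{\ell_i}^\mcC a_{\ell_i}}{\beta_i}\,g(\phi_{\ell_i})\;=\;\psi_0\!\left(1-\frac{1}{\beta_{k+1}}\right),
\]
whose right-hand side is flow-independent, depending only on $\psi_0$ and the compressor ratios along $\mcC$.

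Applying this identity to both $\bphi$ and $\tilde{\bphi}$ and subtracting eliminates the right-hand side, leaving $\sum (n_{\ell_i}^\mcC a_{\ell_i}/\beta_i)\bigl[g(\tilde{\phi}_{\ell_i})-g(\phi_{\ell_i})\bigr]=0$. Under the hypothesis $\sign(\tilde{\bphi}-\bphi)\odot\bn^\mcC>\mathbf{0}$, strict monotonicity of $g$ gives $\sign[g(\tilde{\phi}_{\ell_i})-g(\phi_{\ell_i})]=\sign(\tilde{\phi}_{\ell_i}-\phi_{\ell_i})=n_{\ell_i}^\mcC$, so each summand equals $(a_{\ell_i}/\beta_i)\bigl|g(\tilde{\phi}_{\ell_i})-g(\phi_{\ell_i})\bigr|>0$; the sum of strictly positive numbers cannot vanish---contradiction. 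The reverse-inequality case is identical.

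The main obstacle is precisely that compressors and lossy pipes inject two very different kinds of updates (multiplicative versus additive, flow-independent versus strictly monotone in flow) into the same cycle closure, and the nontrivial step is the rescaling $\tau_i=\psi_i/\beta_i$ that disentangles them and reduces the closure to one clean monotone condition over the lossy edges alone. A subtlety worth flagging is the degenerate case of a cycle containing \emph{no} lossy pipes, where the closure collapses to $\beta_{k+1}=1$ and leaves the flows unconstrained; the argument tacitly relies on the cycle containing at least one lossy edge, which is the setting of the subsequent results.
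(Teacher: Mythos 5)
Your proof is correct, but it takes a genuinely different route from the paper's. The paper argues \emph{locally}: it walks around the cycle starting at node $0$ and shows by induction that the hypothesized sign pattern forces $\tilde{\psi}_i\geq\psi_i$ at every node (strictly once a lossy pipe has been traversed), so that after a full loop $\tilde{\psi}_0>\psi_0$, contradicting the fixed reference pressure. You argue \emph{globally}: you first derive a cycle-closure identity by telescoping the rescaled pressures $\tau_i=\psi_i/\beta_i$, which collapses the loop constraint to a single flow-independent equation over the lossy edges only, and then difference that identity between the two candidate solutions so that strict monotonicity of $g(\phi)=\phi|\phi|$ makes every surviving term carry the same strict sign. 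Both arguments rest on the same two ingredients---monotonicity of the Weymouth pressure drop in the flow, and flow-independence of the compressor update---but your rescaling packages the compressors more cleanly and makes the role of the compression ratios explicit, whereas the paper's induction is more elementary and is the form that gets reused in case \emph{C1)} of the proof of Theorem~\ref{th:GFunique}. Both proofs need the cycle to contain at least one lossy edge: you flag this degenerate case explicitly, and the paper discharges it by its modeling convention that every physical compressor is an ideal compressor in series with a lossy pipe, so a cycle consisting solely of compressors cannot occur. (A cosmetic note: the statement writes $\bn^\mcC\in\mathbb{R}^k$ although a cycle on $k+1$ nodes has $k+1$ edges; your indexing with $\ell_{k+1}$ closing the loop is the consistent reading.)
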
 

The proof of Lemma~\ref{le:1cycle} exploits the fact that the pressure differences across a loop should sum up to zero no matter what the flows $\bphi$ or $\tilde{\bphi}$ are. The usefulness of this lemma is to eliminate scenarios of flow changes. Take for example the case where $\bn^\mcC=\mathbf{1}$, and assume two flow vectors $\bphi$ and $\tilde{\bphi}$ satisfying \eqref{eq:wey2}--\eqref{eq:comp}. We hypothesize the flows in $\tilde{\bphi}$ are larger than the flows in $\bphi$, that is $\tilde{\bphi}>\bphi$. Lemma~\ref{le:1cycle} ensures that this hypothesis is not valid since $\sign(\tilde{\bphi}-\bphi)\odot\bn^\mcC=\mathbf{1}\odot\mathbf{1}=\mathbf{1}>\mathbf{0}$. The hypothesis $\tilde{\bphi}<\bphi$ can be crossed out likewise. 

The proof for the uniqueness of the GF solution builds also on the ensuing result known from network flows. Its proof follows as a special case of \cite[Th.~8.8]{korte2012combinatorial} by setting the source-destination flow to zero. 

\begin{lemma}\label{le:MIT}
Suppose $\bA$ is the edge-node incidence matrix of a directed graph. For any vector $\bn\in\nullspace(\bA^\top)$, there exists a set of cycles $\mcS_\mcC$ such that
\begin{equation}\label{eq:MIT}
\bn=\sum_{\mcC\in\mcS_\mcC}\lambda_\mcC\bn^\mcC
\end{equation}
where $\lambda_\mcC\geq0$ for all $\mcC\in\mcS_\mcC$, and $\bn^\mcC\odot\bn^{\mcC'}\geq \mathbf{0}$ for all $\mcC,\mcC'\in\mcS_\mcC$.
\end{lemma}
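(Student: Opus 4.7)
The plan is to reduce the statement to the classical flow-decomposition theorem for non-negative circulations. Given $\bn\in\nullspace(\bA^\top)$, I would first form an auxiliary directed graph $\tilde\mcG$ by reversing the orientation of every edge $\ell$ with $n_\ell<0$ and setting $\tilde n_\ell:=|n_\ell|$ on the (possibly re-oriented) edge. By construction $\tilde\bn\geq\mathbf{0}$, and the identity $\bA^\top\bn=\mathbf{0}$ translates directly into flow conservation on $\tilde\mcG$: at every node, the total $\tilde n_\ell$ over incoming edges equals the total over outgoing edges. Hence $\tilde\bn$ is a non-negative circulation on $\tilde\mcG$.

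Next I would invoke (or prove by induction on the support size of $\tilde\bn$) the standard peeling decomposition of a non-negative circulation into directed cycles: pick any edge with $\tilde n_\ell>0$; since conservation forces a positive outflow at the head of every edge already on the walk, a directed path can be extended until a node is revisited, yielding a directed cycle $\tilde\mcC$ of $\tilde\mcG$; subtracting $\min_{\ell\in\tilde\mcC}\tilde n_\ell$ times the indicator of $\tilde\mcC$ preserves non-negativity and conservation while strictly reducing the support. Iterating yields $\tilde\bn=\sum_i \lambda_i \bv^i$ with $\lambda_i>0$, where each $\bv^i\in\{0,1\}^P$ is the indicator of a directed cycle $\tilde\mcC_i$ of $\tilde\mcG$.

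Finally, each $\tilde\mcC_i$ corresponds to a cycle $\mcC_i$ of the original graph $\mcG$ on the same sequence of nodes. I would choose the direction of $\mcC_i$ so that its cycle-orientation agrees with the traversal inherited from $\tilde\mcC_i$. Under this convention, an edge $\ell\in\mcC_i$ satisfies $n_\ell^{\mcC_i}=+1$ whenever the traversal matches the original $\mcG$-orientation of $\ell$ (equivalently, when $n_\ell>0$) and $n_\ell^{\mcC_i}=-1$ otherwise. Thus $n_\ell^{\mcC_i}$ shares sign with $n_\ell$ for every $\ell\in\mcC_i$, which simultaneously yields $\bn=\sum_i\lambda_i\bn^{\mcC_i}$ and the compatibility property: if an edge $\ell$ belongs to two cycles $\mcC_i,\mcC_j$ of the collection, both $n_\ell^{\mcC_i}$ and $n_\ell^{\mcC_j}$ carry $\sign(n_\ell)$, so their product is in $\{0,+1\}$. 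Merging any duplicate cycles by summing their coefficients delivers the desired set $\mcS_\mcC$.

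The step I expect to require the most care is the orientation book-keeping in the lift-back: I must verify that the directed cycle produced in $\tilde\mcG$, when interpreted back in $\mcG$, has an indicator vector $\bn^{\mcC_i}$ whose $\pm 1$ entries exactly recover the signs that were stripped when forming $\tilde\bn$. Once this is pinned down, both the non-negativity of $\lambda_\mcC$ and the pairwise compatibility $\bn^\mcC\odot\bn^{\mcC'}\geq\mathbf{0}$ fall out immediately from the non-negativity of $\tilde\bn$ on $\tilde\mcG$.
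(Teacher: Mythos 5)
Your proposal is correct and amounts to a self-contained proof of exactly the result the paper invokes: the paper's ``proof'' is a one-line citation of the flow-decomposition theorem (Korte--Vygen, Th.~8.8) specialized to circulations, and your edge-reversal plus cycle-peeling argument is the standard proof of that theorem, with the orientation bookkeeping ($n_\ell^{\mcC_i}=\sign(n_\ell)$ for every $\ell\in\mcC_i$) correctly delivering both the non-negativity of the $\lambda_\mcC$ and the compatibility $\bn^\mcC\odot\bn^{\mcC'}\geq\mathbf{0}$. No gaps; you have simply written out what the paper delegates to the reference.
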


Lemma~\ref{le:MIT} asserts that any vector in $\nullspace(\bA^\top)$ can be expressed as a conic combination of cycle indicator vectors. Moreover, if any pair of these cycles shares an edge, this edge participates to both cycles in the same direction. The following claim follows easily from \eqref{eq:MIT}.

\begin{corollary}\label{co:MIT}
If $\lambda_\mcC>0$ in the representation of \eqref{eq:MIT}, then $n_\ell \cdot n_\ell^\mcC>0$ for all $\ell\in \mcC$.
\end{corollary}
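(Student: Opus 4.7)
The plan is to extract the $\ell$-th entry of the representation in \eqref{eq:MIT}, multiply by $n_\ell^\mcC$, and exploit the sign-compatibility condition $\bn^{\mcC'}\odot\bn^{\mcC''}\geq\mathbf{0}$ that Lemma~\ref{le:MIT} guarantees for every pair of cycles in $\mcS_\mcC$.

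First, I would fix an edge $\ell\in\mcC$. By the definition of a cycle indicator in \eqref{eq:nc}, $n_\ell^\mcC\in\{+1,-1\}$, so $(n_\ell^\mcC)^2=1$. Reading off the $\ell$-th scalar component of \eqref{eq:MIT} and multiplying both sides by $n_\ell^\mcC$ gives
\begin{equation*}
n_\ell\cdot n_\ell^\mcC \;=\; \lambda_\mcC\,(n_\ell^\mcC)^2 \;+\; \sum_{\mcC'\in\mcS_\mcC\setminus\{\mcC\}} \lambda_{\mcC'}\,n_\ell^{\mcC'}\,n_\ell^\mcC.
\end{equation*}

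Next I would argue that every term in the sum on the right is nonnegative. The coefficients $\lambda_{\mcC'}$ are nonnegative by hypothesis. For the factor $n_\ell^{\mcC'}\,n_\ell^\mcC$, Lemma~\ref{le:MIT} states that $\bn^{\mcC'}\odot\bn^{\mcC}\geq\mathbf{0}$ entry-wise, so in particular the $\ell$-th entry satisfies $n_\ell^{\mcC'}\,n_\ell^{\mcC}\geq 0$. Consequently each summand is $\geq 0$.

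Combining these observations, $n_\ell\cdot n_\ell^\mcC\geq \lambda_\mcC\,(n_\ell^\mcC)^2=\lambda_\mcC>0$, which is exactly the claim. I do not foresee a real obstacle here: the result is essentially a one-line consequence of the pairwise sign-agreement property, and the only subtlety is to remember that $n_\ell^\mcC$ is $\pm 1$ (never zero) for $\ell\in\mcC$, so that isolating the $\mcC$-term in the representation yields a strictly positive contribution.
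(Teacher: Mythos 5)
Your argument is correct and is precisely the ``follows easily from \eqref{eq:MIT}'' computation the paper has in mind (the paper gives no explicit proof): isolating the $\ell$-th entry, using $(n_\ell^\mcC)^2=1$ for $\ell\in\mcC$, and discarding the nonnegative cross terms via the pairwise sign-agreement $\bn^{\mcC}\odot\bn^{\mcC'}\geq\mathbf{0}$ gives $n_\ell\cdot n_\ell^\mcC\geq\lambda_\mcC>0$. Nothing is missing.
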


Using Lemmas~\ref{le:1cycle}--\ref{le:MIT} and Corollary~\ref{co:MIT}, we next prove the uniqueness of the solution to the steady-state GF problem.

\begin{theorem}\label{th:GFunique}  
The gas flow problem \eqref{eq:G1} has a unique solution, if feasible.
\end{theorem}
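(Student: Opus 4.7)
The plan is to argue by contradiction and reduce the full-network question to a sign-pattern contradiction on a single cycle, where Lemma~\ref{le:1cycle} can be invoked. Suppose \eqref{eq:G1} admits two distinct solutions $(\bphi, \bpsi)$ and $(\tilde{\bphi}, \tilde{\bpsi})$ sharing the common reference pressure $\psi_r$. Lemma~\ref{le:fp} asserts that a valid pair is uniquely determined by either component, so distinctness must surface in the flows: $\delta\bphi := \tilde{\bphi} - \bphi \neq \mathbf{0}$. Since both flow vectors satisfy \eqref{eq:mc2} with the common injection profile $\bq$, subtracting the two mass-balance equations gives $\bA^\top \delta\bphi = \mathbf{0}$, placing $\delta\bphi$ in $\nullspace(\bA^\top)$.

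I would next invoke Lemma~\ref{le:MIT} to decompose $\delta\bphi = \sum_{\mcC \in \mcS_\mcC} \lambda_\mcC \bn^\mcC$ as a conic combination of cycle indicators whose directions are pairwise orientation-compatible. Because $\delta\bphi \neq \mathbf{0}$, at least one coefficient $\lambda_{\mcC^*} > 0$ exists, singling out a particular cycle $\mcC^*$. Applying Corollary~\ref{co:MIT} to this cycle yields the crucial structural fact $\delta\phi_\ell\cdot n_\ell^{\mcC^*} > 0$ for every edge $\ell \in \mcC^*$, which translates to $\sign(\tilde{\bphi} - \bphi) \odot \bn^{\mcC^*} > \mathbf{0}$ on all edges of the cycle. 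This is precisely the orientation configuration highlighted as forbidden in the example just after Lemma~\ref{le:1cycle}.

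The contradiction is then to be produced by Lemma~\ref{le:1cycle}: restricted to the subgraph $\mcC^*$, the pairs $(\bphi|_{\mcC^*}, \bpsi|_{\mcC^*})$ and $(\tilde{\bphi}|_{\mcC^*}, \tilde{\bpsi}|_{\mcC^*})$ continue to satisfy \eqref{eq:wey2}--\eqref{eq:comp} on every cycle edge, yet exhibit exactly the sign pattern Lemma~\ref{le:1cycle} rules out. This forces $\delta\bphi = \mathbf{0}$, and Lemma~\ref{le:fp} then upgrades the conclusion to $\bpsi = \tilde{\bpsi}$. The main obstacle is the clean reduction to a single-cycle setting in the presence of compressors: the hypothesis ``for fixed squared pressure $\psi_0$'' of Lemma~\ref{le:1cycle} is not a priori met at cycle nodes, since the network-wide reference $r$ need not lie on $\mcC^*$ and pressures at cycle nodes may differ between the two solutions. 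This is resolved by the fact that the loop-closure identity $\sum_\ell n_\ell^{\mcC^*}(\psi_m - \psi_n) = 0$ holds separately for $\bpsi$ and $\tilde{\bpsi}$ regardless of their individual values at any specific node; combined with the strict monotonicity of $a_\ell \phi|\phi|$ on lossy edges and the rigid multiplicative action of compressor edges on $\delta\bpsi$, this identity is what powers Lemma~\ref{le:1cycle} and closes the argument.
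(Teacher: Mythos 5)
Your setup (contradiction, $\delta\bphi\in\nullspace(\bA^\top)$, the Lemma~\ref{le:MIT} decomposition, and Corollary~\ref{co:MIT} giving $\sign(\tilde{\bphi}-\bphi)\odot\bn^{\mcC^*}>\mathbf{0}$ on every edge of some cycle $\mcC^*$) matches the paper exactly. The gap is in the last step, and you have correctly located it yourself: Lemma~\ref{le:1cycle} is stated \emph{for fixed squared pressure $\psi_0$} at a node of the cycle, and nothing in your argument establishes that $\bpsi$ and $\tilde{\bpsi}$ agree at any node of $\mcC^*$. Your proposed resolution --- that the loop-closure identity $\sum_\ell n_\ell^{\mcC^*}(\psi_m-\psi_n)=0$ alone, together with monotonicity of $w(\phi)=a_\ell\phi|\phi|$, powers the lemma --- works only when every edge of $\mcC^*$ is a lossy pipe. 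If the cycle contains a compressor $\ell=(m,n)\in\mcP_a$, the term $\psi_m-\psi_n=(1-\alpha_\ell)\psi_m$ depends on the absolute pressure level rather than on the flow, so subtracting the two loop identities leaves a residual $(1-\alpha_\ell)(\tilde{\psi}_m-\psi_m)$ of unknown sign; the forbidden sign pattern then merely forces $\tilde{\psi}_m\neq\psi_m$ instead of producing a contradiction. This is precisely why the lemma's proof is an induction launched from a node of \emph{known common} pressure, not a bare telescoping argument, and the theorem is claimed for general networks with compressors.

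The paper closes this hole with machinery you omit: a recursion over cycles ordered by their distance $d(\mcC)$ to the reference node $r$. If $\mcC$ contains $r$, the pressure there is anchored and Lemma~\ref{le:1cycle} applies directly. Otherwise, take the shortest path from the closest cycle node $i_\mcC$ to $r$: if $\delta\bphi$ vanishes on that path, the pressures propagate identically along it, so $\psi_{i_\mcC}=\tilde{\psi}_{i_\mcC}$ and $i_\mcC$ serves as the reference for the lemma; if not, some edge of the path with $\delta\phi_{\ell'}\neq 0$ lies on a cycle strictly closer to $r$, and the argument recurses until the anchored case occurs. You need this (or an equivalent pressure-anchoring device) to make the reduction to Lemma~\ref{le:1cycle} legitimate; as written, your proof is complete only for networks with no compressors on cycles.
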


\begin{proof}
Proving by contradiction, assume $\bphi$ and $\tilde{\bphi}$ are two distinct flow vectors solving \eqref{eq:G1}. Since both vectors satisfy \eqref{eq:mc2}, their difference $\bn:=\bphi-\tilde{\bphi}$ must lie in $\nullspace(\bA^\top)$. Then, from Lemma~\ref{le:MIT}, vector $\bn$ can be decomposed as in \eqref{eq:MIT}. 

Select any edge $\ell$ corresponding to a non-zero entry of $\bn$. Since $n_\ell\neq 0$, there exists a cycle $\mcC$ for which $\lambda_\mcC \cdot n_\ell^\mcC\neq 0$. Two cases can be identified:

\emph{C1) Cycle $\mcC$ contains the reference node $r$.} From Corollary~\ref{co:MIT}, it follows that $n_\ell \cdot n_\ell^\mcC=(\phi_\ell-\tilde{\phi}_\ell)\cdot n_\ell^\mcC>0$ for all edges $\ell\in \mcC$. The latter contradicts Lemma~\ref{le:1cycle} and proves the claim. 

\emph{C2) Cycle $\mcC$ does not contain the reference node $r$.} Calculate the \emph{distance} of cycle $\mcC$ from $r$ that we define as
\begin{equation}\label{eq:distance}
d(\mcC):=\min_{i \in \mcC} d_{i-r} 
\end{equation}
where $d_{i-r}$ counts the edges in the shortest path between nodes $i$ and $r$. The node in $\mcC$ attaining distance $d(\mcC)$ will be indexed by $i_\mcC$. If multiple nodes satisfy the latter property, pick one arbitrarily. Consider the shortest path $i_\mcC-r$ between nodes $i_\mcC$ and $r$. Two cases can be identified again.

\emph{C2a) The entries of $\bn$ associated with the edges in path $i_\mcC-r$ are all zero.} This implies that the related entries in $\bphi$ and $\tilde{\bphi}$ are equal. Therefore, the nodal pressures along the path $i_\mcC-r$ can be recursively computed using \eqref{eq:wey2} and \eqref{eq:comp} starting from $\psi_r$. Then, the nodal pressures along $i_\mcC-r$ agree between $\boldsymbol{\psi}$ and $\tilde{\boldsymbol{\psi}}$, and so $\psi_{i_\mcC}=\tilde{\psi}_{i_\mcC}$. Since the pressure at node $i_\mcC$ has been fixed, this node can serve as a reference node, the argument under \emph{C1)} applies, and proves the claim. 

\emph{C2b) There exists an edge $\ell'$ in $i_\mcC-r$ with a non-zero entry in $\bn$.} This edge must belong to a cycle $\mcC'$. Check whether cycle $\mcC'$ falls under \emph{C1)} or \emph{C2)}, and apply the previous reasoning recursively.

The previous process considers cycles $\mcC$ with progressively smaller distances $d(\mcC)$, and so case \emph{C1)} eventually occurs. The recursion terminates upon finding a cycle contradicting Lemma~\ref{le:1cycle} under case \emph{C1)}, and completes the proof.
\end{proof}

\begin{remark}\label{re:parallel}
Two compressors can be sometimes connected in parallel~\cite{wolf2000energy}. If they are assumed ideal, it is not possible to infer the gas flow on each compressor. Under the practical assumption that the two compressors are non-ideal, this identifiability concern is waived as validated by Theorem~\ref{th:GFunique}.
\end{remark}

\section{MI-SOCP Relaxation}\label{sec:cr}
Having established the uniqueness of the GF solution, this section develops an MI-SOCP relxation of \eqref{eq:G1}, and then its exactness under some network conditions.

\subsection{Problem Reformulation}\label{subsec:cr:problem}
The non-convexity in \eqref{eq:G1} is due to the Weymouth equation in \eqref{seq:wey2a}. Using the big-$M$ trick and upon introducing a binary variable $x_\ell$ for every lossy pipe $\ell=(m,n)\in\bmcP_a$, constraint \eqref{seq:wey2a} can be relaxed to convex inequalities as
\begin{subequations}\label{eq:weyr}
	\begin{align}
	-&M(1-x_{\ell})\leq \phi_{\ell}\leq Mx_{\ell}\label{seq:weyra}\\
	-&M(1-x_{\ell})\leq \psi_m-\psi_n-a_{\ell}\phi_{\ell}^2\label{seq:weyrb}\\
	&\psi_m-\psi_n + a_{\ell}\phi_{\ell}^2\leq Mx_{\ell}\label{seq:weyrc}\\
	&x_{\ell}\in\{0,1\}\label{seq:weyrd}
	\end{align}
\end{subequations}
for some large $M>0$. When $x_\ell=1$, constraint \eqref{seq:weyra} yields $\phi_\ell\geq 0$, constraint \eqref{seq:weyrb} reads $\psi_m-\psi_n-a_{\ell}\phi_{\ell}^2\geq 0$, and \eqref{seq:weyrc} is satisfied trivially. When $x_\ell=0$, constraint \eqref{seq:weyra} yields $\phi_\ell\leq 0$, constraint \eqref{seq:weyrb} is satisfied trivially, and \eqref{seq:weyrc} becomes $\psi_m-\psi_n+a_{\ell}\phi_{\ell}^2\leq 0$. We therefore get that the binary variable equals $x_\ell=\sign(\phi_{\ell})$. It hence determines the direction of flow $\phi_\ell$, and activates \eqref{seq:weyrb} or \eqref{seq:weyrc}. 

By replacing \eqref{eq:wey2} with \eqref{eq:weyr} in \eqref{eq:G1}, the GF problem can be posed as an MI-SOCP. However, the minimizer of \eqref{eq:G2} is useful only if it satisfies \eqref{seq:weyrb} or \eqref{seq:weyrc} (depending on the value of $x_\ell$) with equality for all lossy pipes. Then, the relaxation is termed exact. Otherwise, the minimizer of \eqref{eq:G2} is infeasible for \eqref{eq:G1}. 

To promote exact relaxations, we will substitute the GF problem \eqref{eq:G1} by the minimization
\begin{align*}\label{eq:G2}
\min~&~r(\boldsymbol{\psi})\tag{G2}\\
\mathrm{over}~& ~ \bphi,\boldsymbol{\psi},\bx\\
\mathrm{s.to}~&~ \eqref{eq:mc2},\eqref{eq:comp},\eqref{eq:weyr}
\end{align*}
where vector $\bx$ contains all $x_\ell$ with $\ell\in\bmcP_{a}$, and
\begin{equation*}%\label{eq:gh}
r(\boldsymbol{\psi}):=\sum_{(m,n)\in \bmcP_{a}}|\psi_m-\psi_n|.
\end{equation*}
The cost $r(\boldsymbol{\psi})$ sums up the absolute pressure differences across all lossy pipes. A similar MI-SOCP relaxation has been proposed for optimizing flow in water distribution networks in \cite{singh2018optimal}. Although problem \eqref{eq:G2} remains non-convex due to the binary variables $\bx$, with advancements in MI-SOCP solvers, this minimization can be handled for moderately sized networks~\cite{vishnu2010misocp}, as corroborated by our numerical tests. The next section provides well-defined network conditions under which the relaxation \eqref{eq:weyr} in \eqref{eq:G2} is exact. 

\subsection{Exactness of the Relaxation}\label{subsec:cr:exact}
The relaxation from \eqref{eq:wey2} to \eqref{eq:weyr} has been proposed earlier in \cite{backhaus2016convex}, \cite{zlotnik2017adaptive}, \cite{zang2018IET}; yet without optimality guarantees. For instance, reference \cite{backhaus2016convex} solves gas expansion planning through this relaxation. Upon fixing the binary variables to the values obtained via the relaxation, the continuous variables are then heuristically refined by a local search. The convex relaxation has been combined with a McCormick relaxation in \cite{zlotnik2017adaptive}. This section provides network conditions ensuring that the relaxation of \eqref{eq:G2} is provably exact.

\begin{assumption}\label{as:A1}
Graph $\mcG$ has no compressors in cycles.
\end{assumption}

\begin{assumption}\label{as:A2}
Every edge of $\mcG$ belongs to at most one cycle. 
\end{assumption}

\begin{theorem}\label{th:gfexact}
 Under Assumptions~\ref{as:A1} and \ref{as:A2}, every minimizer of \eqref{eq:G2} solves the GF problem \eqref{eq:G1}, if the latter is feasible.
\end{theorem}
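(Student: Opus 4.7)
The plan is to sandwich the optimal value of \eqref{eq:G2} between two equal quantities, forcing every relaxed inequality to be tight. Let $(\bphi,\bpsi,\bx)$ be any minimizer of \eqref{eq:G2} and let $(\bphi^*,\bpsi^*)$ be the unique solution of \eqref{eq:G1} guaranteed by Theorem~\ref{th:GFunique}. Setting $x_\ell^*=1$ when $\phi_\ell^*\geq 0$ and $0$ otherwise renders $(\bphi^*,\bpsi^*,\bx^*)$ feasible for \eqref{eq:G2}, and since \eqref{seq:wey2a} holds with equality at the true solution, $r(\bpsi^*)=\sum_{\ell\in\bmcP_a} a_\ell(\phi_\ell^*)^2$, so optimality yields $r(\bpsi)\leq r(\bpsi^*)$.

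For the matching lower bound, Assumption~\ref{as:A2} makes $\mcG$ a cactus whose edges split into bridges (in no cycle) and cycle edges, each in a unique cycle; Assumption~\ref{as:A1} further ensures every cycle is composed of lossy pipes. Removing a bridge disconnects $\mcG$, so mass conservation pins $\phi_\ell=\phi_\ell^*$ on every bridge, and \eqref{eq:weyr} gives $|\psi_m-\psi_n|\ge a_\ell\phi_\ell^2=a_\ell(\phi_\ell^*)^2$ with equality iff the bridge relaxation is tight. For each cycle $\mcC$ the pressures telescope as $\sum_{\ell\in\mcC} n_\ell^\mcC(\psi_m-\psi_n)=0$, while \eqref{eq:weyr} forces $\sign(\psi_m-\psi_n)=2x_\ell-1$ and $|\psi_m-\psi_n|\ge a_\ell\phi_\ell^2$. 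Partitioning $\mcC$ by the sign $c_\ell:=n_\ell^\mcC(2x_\ell-1)\in\{\pm 1\}$ and letting $A_\mcC,B_\mcC$ gather $a_\ell\phi_\ell^2$ over pipes with $c_\ell=+1,-1$, the telescoping identity yields $\sum_{\ell\in\mcC}|\psi_m-\psi_n|\ge 2\max(A_\mcC,B_\mcC)$.

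The crux is then to show $\max(A_\mcC,B_\mcC)\ge A_\mcC^*:=\tfrac12\sum_{\ell\in\mcC} a_\ell(\phi_\ell^*)^2$ for every feasible $(\bphi,\bx)$. Since cycles are edge-disjoint, the mass-conservation freedom on $\mcC$ is one-dimensional, giving $\bphi|_\mcC=\bphi^*|_\mcC+\lambda_\mcC\bn^\mcC$ for a single scalar $\lambda_\mcC$. Feasibility forces $2x_\ell-1=\sign(\phi_\ell)$ wherever $\phi_\ell\neq 0$, so $A_\mcC-B_\mcC=\sum_{\ell\in\mcC} n_\ell^\mcC a_\ell\phi_\ell|\phi_\ell|$. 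The right-hand side is a piecewise-differentiable function of $\lambda_\mcC$ with derivative $2\sum_{\ell\in\mcC} a_\ell|\phi_\ell(\lambda_\mcC)|$, hence strictly increasing, and vanishes at $\lambda_\mcC=0$ by the cycle Weymouth identity evaluated at $(\bphi^*,\bpsi^*)$. Consequently $A_\mcC-B_\mcC$ inherits the sign of $\lambda_\mcC$: for $\lambda_\mcC>0$ we have $\max(A_\mcC,B_\mcC)=A_\mcC(\lambda_\mcC)$, and decomposing cycle edges by the sign of $n_\ell^\mcC\phi_\ell^*$ and comparing term-by-term---continuously across the thresholds where individual flows hit zero, since a crossing edge contributes zero on either side of the partition---yields $A_\mcC(\lambda_\mcC)\ge A_\mcC^*$ with equality only at $\lambda_\mcC=0$. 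The case $\lambda_\mcC<0$ follows symmetrically through $B_\mcC$.

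Summing the bridge and cycle lower bounds gives $r(\bpsi)\ge r(\bpsi^*)$, and combined with the earlier upper bound this forces equality throughout: every bridge relaxation is tight, $\lambda_\mcC=0$ on every cycle, and every cycle relaxation is tight. The resulting $(\bphi,\bpsi)$ then satisfies \eqref{eq:wey2}--\eqref{eq:comp}, so by Theorem~\ref{th:GFunique} it coincides with $(\bphi^*,\bpsi^*)$, establishing exactness. The main obstacle is the cycle inequality $\max(A_\mcC,B_\mcC)\ge A_\mcC^*$; its essential ingredients are the identity $A_\mcC-B_\mcC=\sum_{\ell\in\mcC} n_\ell^\mcC a_\ell\phi_\ell|\phi_\ell|$ and the monotonicity argument that survives the sign-change thresholds, and the rest of the proof is bookkeeping on the cactus structure.
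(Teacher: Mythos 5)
Your proof is correct, and it takes a genuinely different route from the paper's. The paper argues by a local exchange: given a minimizer $(\tbphi,\tbpsi)$ with $\tbphi\neq\bphi^*$, it isolates one cycle $\mcC$ where the flows differ, swaps in the true flows on $\mcC$ only, and then explicitly reconstructs a compatible pressure vector $\hbpsi$ through a three-case analysis (nodes off the cycle, on the cycle, and ``behind'' the cycle relative to the reference node) so as to exhibit a feasible point with strictly smaller objective. You instead never construct a competing point at all: you sandwich the optimal value, using the true solution as the feasible competitor for the upper bound $r(\bpsi)\leq r(\bpsi^*)$, and proving the matching lower bound $r(\bpsi)\geq r(\bpsi^*)$ edge-group by edge-group over the cactus. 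The core technical ingredients coincide --- both proofs exploit the telescoping identity $\sum_{\ell\in\mcC}n_\ell^\mcC(\psi_m-\psi_n)=0$ to reduce the cycle cost to twice the ``aligned'' half (your $2\max(A_\mcC,B_\mcC)$ is the paper's \eqref{eq:sumhat}--\eqref{eq:P+less}), and both rest on the strict monotonicity of $\phi\mapsto\phi|\phi|$ along the one-dimensional flow freedom $\lambda_\mcC\bn^\mcC$ that Assumption~\ref{as:A2} provides --- but your version buys a cleaner argument by avoiding the pressure-reconstruction step entirely, which is the most delicate part of the paper's proof (and the only place where issues such as $\psi_n\geq 0$ for the constructed point could arise). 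The price is the small extra analysis showing $\max(A_\mcC,B_\mcC)\geq A_\mcC^*$ with equality only at $\lambda_\mcC=0$, which you handle correctly, including the thresholds where individual flows change sign. As a bonus, your equality analysis directly recovers which constraints must be tight, so the appeal to Theorem~\ref{th:GFunique} at the end is not even needed: tightness already makes the minimizer feasible for \eqref{eq:G1}.
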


Heed that Assumptions 1 and 2 may not be always satisfied in practical GNs; see e.g., Remark~\ref{re:parallel}. Albeit, the tests of Section~\ref{sec:tests} show that \eqref{eq:G2} solves \eqref{eq:G1} even when Assumption~\ref{th:gfexact} does not hold. Either way, Theorem~\ref{th:gfexact} guarantees that \eqref{eq:G2} can provably handle the GF task for a broader class of GNs than existing alternatives; recall \cite{wolf2000energy} cannot handle compressors, and \cite{Dvijotham15} presumes known flow directions. 

\begin{figure}[t]
	\centering
	\includegraphics[scale=0.34,angle=-90]{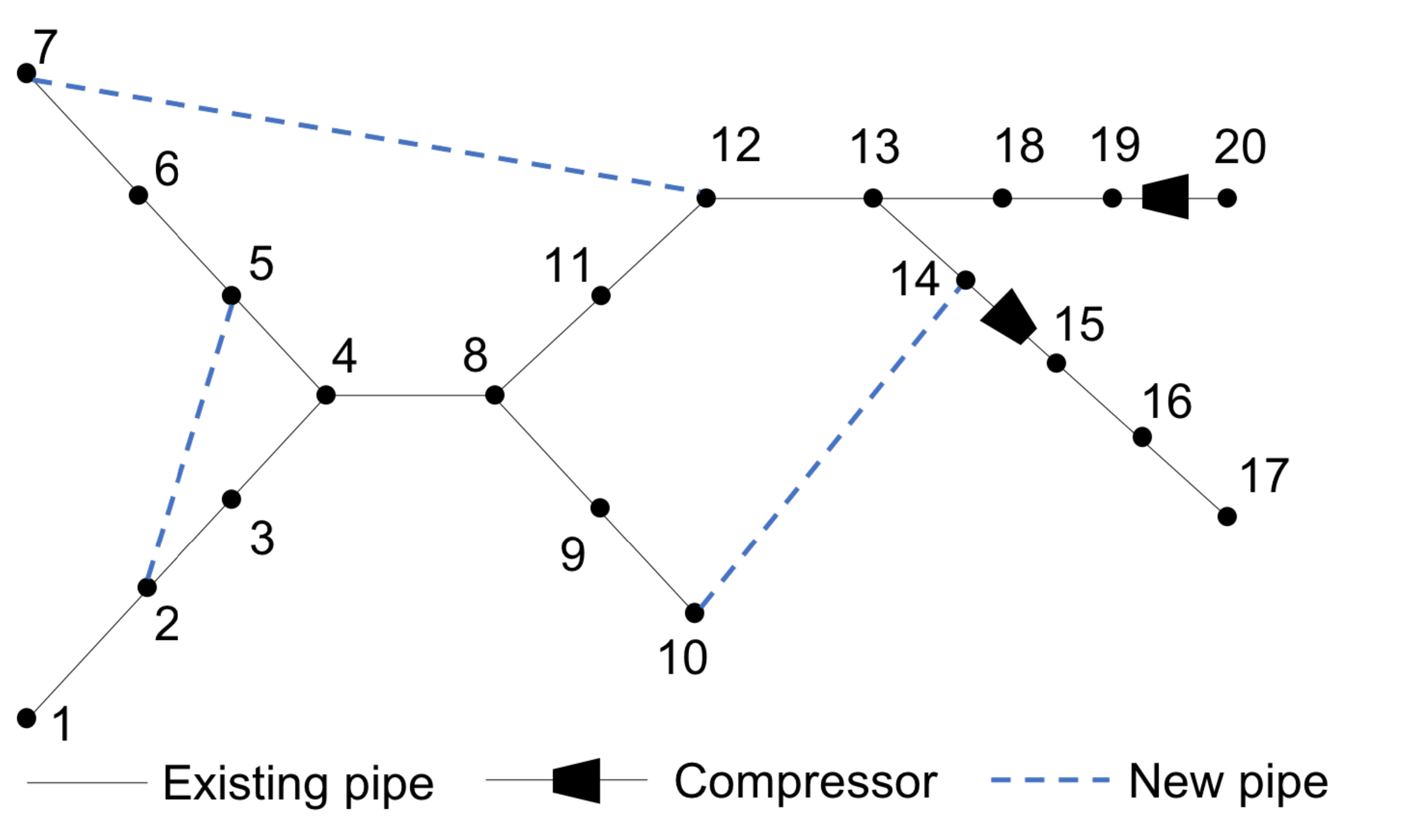}
	\caption{Modified Belgian natural gas network.}
	\label{fig:belgian}
\end{figure}

\section{Numerical Tests}\label{sec:tests} 
Our relaxed GF solver was tested on the Belgian benchmark network shown in Figure~\ref{fig:belgian}. Parallel pipes were replaced by their equivalents, and the compression ratios were determined based on the nodal pressures found in~\cite{c}. Problem \eqref{eq:G2} was solved using the MATLAB-based optimization toolbox YALMIP along with the mixed-integer solver CPLEX~\cite{yalmip},~\cite{cplex}, on a 2.7~GHz Intel Core i5 computer with 8~GB RAM. For the big--$M$ trick, we set $M=10^4$.

\begin{figure}[t]
	\centering
	\includegraphics[scale=0.2]{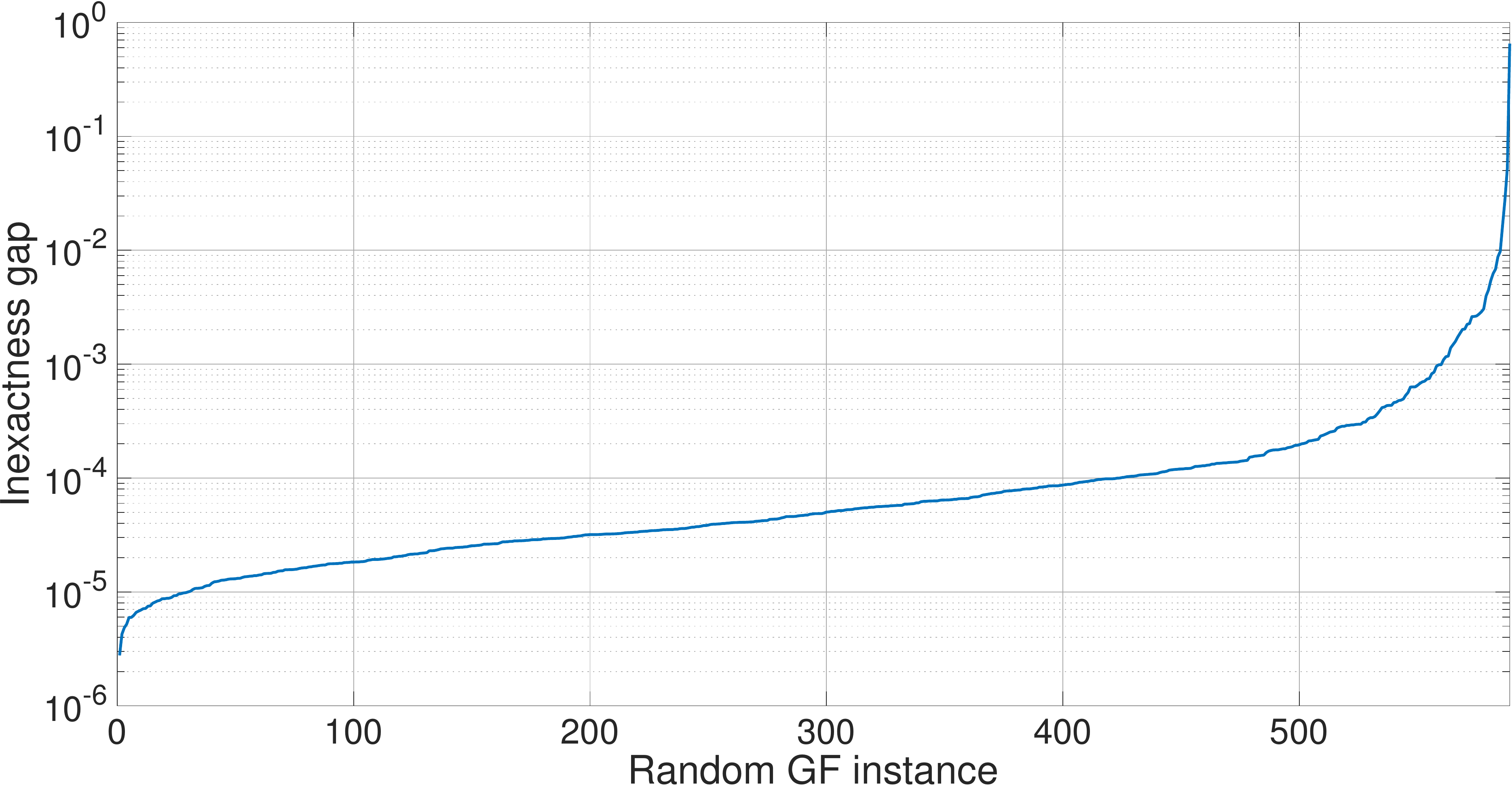}
	\caption{Inexactness gap attained by \eqref{eq:G2} over random feasible GF instances.}
	\label{fig:605feas}
\end{figure}

We first tested the \eqref{eq:G2} solver on the original tree Belgian network. The obtained pressure and flow values agreed with those of \cite{wolf2000energy}. We then inserted additional pipelines on the original benchmark to get a non-radial network as shown in Figure~\ref{fig:belgian}. Even though Theorem~\ref{th:gfexact} requires non-overlapping cycles (Assumption~\ref{as:A2}), the modified network does not comply with this assumption. To use reasonable friction coefficients, for every new line $(m,n)$, the coefficient $a_{mn}$ was set equal to the sum of $a_\ell$'s along the $m-n$ path, yielding $a_{2,5}=0.1936$, $a_{10,14}=0.0439$, $a_{7,12}=0.0419$.

The reference pressure at node $1$ and the compression ratios were kept constant as in \cite{wolf2000energy}. Note that the benchmark gas injections $\bq_o$ lie in the range of $[-15.61, 22.01]$. To test the exactness of the \eqref{eq:G2} relaxation under various conditions, we drew $1,000$ random gas injection vectors $\bq$'s by adding a zero-mean unit-variance deviation on the entries of $\bq_o$. To ensure balanced injections, the injection at node $20$ was set to the negative sum of the remaining injections. 

\begin{figure}[t]
	\centering
	\includegraphics[scale=0.2]{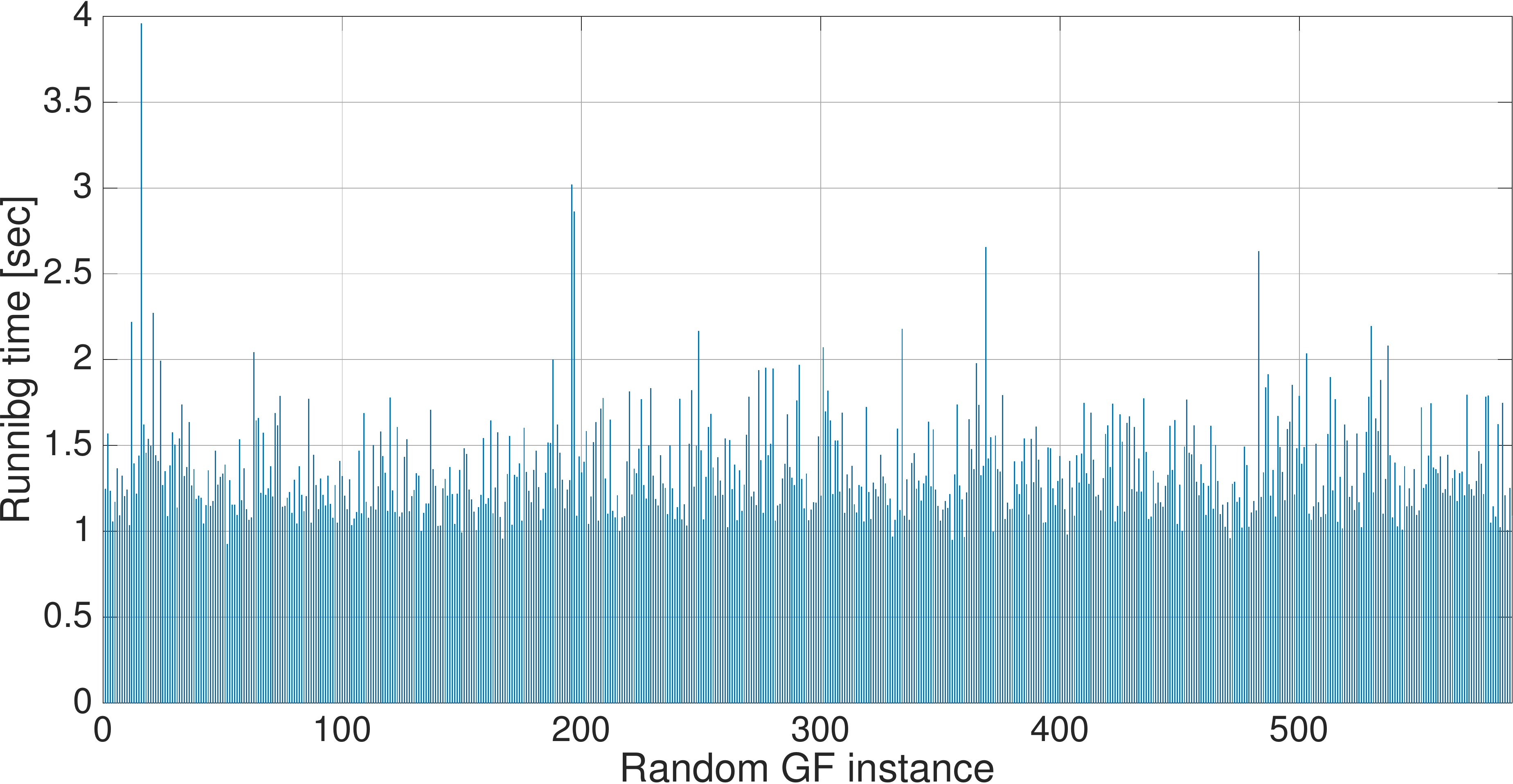}
	\caption{Running time for \eqref{eq:G2} over random feasible GF instances.}
	\label{fig:time}
\end{figure}

Due to the randomness in selecting gas injections, the resulting pipe flows are not guaranteed to satisfy~\eqref{seq:compb}: The problem was found to be infeasible for $411$ out of the $1,000$ random injection vectors. Since \eqref{eq:G2} is a relaxation of \eqref{eq:G1}, these cases are apparently infeasible for \eqref{eq:G1} too. The performance of \eqref{eq:G2} was henceforth tested on the remaining $589$ gas injection instances that were feasible.  

To numerically quantify the success of \eqref{eq:G2} in solving \eqref{eq:G1}, we define the \emph{inexactness gap} for injection vector $\bq$ as
\[g(\bq):=\max_{(m,n)\in\bmcP_a}\frac{|\psi_m-\psi_n|-a_{mn}\phi_{mn}^2}{a_{mn}\phi_{mn}^2}\geq 0\]
for the pressures $\{(\psi_m,\psi_n)\}$ and flows $\{\phi_{mn}\}$ obtained by solving \eqref{eq:G2} for the injection vector $\bq$. Figure~\ref{fig:605feas} depicts the ranked inexactness gap along the feasible GF instances. Based on this curve, the gap was less than $10^{-4}$ for more than $72\%$ of the instances, and less than $10^{-3}$ for more than  $95 \%$. This corroborates that \eqref{eq:G2} performs well even when Assumption~\ref{as:A2} is not met. Figure~\ref{fig:time} shows the running time for solving \eqref{eq:G2} over the $589$ feasible instances. The average (median) running time was $1.39$~sec ($1.28$~sec).

%%%%%%%%%%%%%%%%%%%%%%%%%%%%%%%%%%%%%%%%%%%%%%%%%%%%
\section{Conclusions}\label{sec:conclusions}
This work has established the uniqueness of the nonlinear steady-state GF equations, put forth a an MI-SOCP-based GF solver, and provided network conditions under which this solver succeeds. Numerical tests have shown that the relaxation is exact. The average time of the solver for a toy $20$-node and $22$-pipe network is less than $2$~seconds; yet its scalability to real-world networks is still to be explored. The success of the relaxation even when the postulated assumptions were not met motivates further research on broadening the conditions. Combining the MI-SOCP-based solver with existing solvers and adopting it to tackle the GF task under the dynamic setup and towards state estimation purposes constitute our current research efforts.

%%%%%%%%%%%%%%%%%%%%%%%%%%%%%%%%%%%%%%%%%%%%%%%%%%%%
\appendix
\subsection{Proof of Lemma~\ref{le:1cycle}}
Select the cycle direction ${0\rightarrow1\dots\rightarrow k\rightarrow0}$. Without loss of generality (wlog), suppose the $i$-th entry of $\bphi$ corresponds to the flow in the pipe connecting nodes $(i-1)$ and $i$. We will prove the first claim by contradiction; the second claim follows similarly. 
	
	Suppose the flow vectors $\bphi$ and $\tilde{\bphi}$ satisfy \eqref{eq:wey2}--\eqref{eq:comp} and $\sign(\tilde{\bphi}-\bphi)\odot\bn<0$. The assumption $\sign(\tilde{\bphi}-\bphi)\odot\bn<0$ apparently implies that
	\begin{align}\label{eq:signs}
	\phi_i &>\tilde{\phi}_i,~\textrm{if}~n_i=+1, \textrm{and}\\
	\phi_i&<\tilde{\phi}_i,~\textrm{if}~n_i=-1.\nonumber
	\end{align}
	We will show by induction on $i$ that $\tilde{\psi}_i\geq\psi_i$ for all $i\neq 0$. 
	
	Starting with the base step of $i=1$, the edge between nodes $0$ and $1$ can be either a lossy pipe or a compressor. If it is a lossy pipe, denote the RHS of \eqref{seq:wey2a} by $w(\phi_\ell)$. It is not hard to see that $w(\phi_\ell)$ is monotonically increasing in $\phi_\ell$. Depending on the value of $n_1$, two cases can be identified:
	\begin{itemize}
		\item If $n_1=+1$, it follows that
		\[\psi_0-\tilde{\psi}_1=w(\tilde{\phi}_1)<w(\phi_1)=\psi_0-\psi_1\]
		where the two equalities stem from \eqref{seq:wey2a} and the inequality from \eqref{eq:signs}. This implies $\tilde{\psi}_1>\psi_1$. 
		
		\item If $n_1=-1$, we similarly get that
		\[\tilde{\psi}_1-\psi_0=w(\tilde{\phi}_1)>w(\phi_1)=\psi_1-\psi_0\]
		which again implies $\tilde{\psi}_1>\psi_1$.
	\end{itemize}
	If the edge between nodes $0$ and $1$ is a compressor, the linear dependence in \eqref{seq:compa} yields $\tilde{\psi}_1=\psi_1=\alpha_1\psi_0$. Thus, the claim $\tilde{\psi}_1\geq\psi_1$ holds for all cases.
	
	Proceeding with the induction step, we will assume $\tilde{\psi_i}\geq\psi_i$ and prove that $\tilde{\psi}_{i+1}\geq\psi_{i+1}$. If the edge between node $i$ and $i+1$ is a lossy pipe, the following cases arise:
	\begin{itemize}
		\item If $n_{i+1}=+1$, it holds that $\phi_{i+1}>\tilde{\phi}_{i+1}$ from \eqref{eq:signs} and
		\begin{align*}	\tilde{\psi}_i-\tilde{\psi}_{i+1}&=w(\tilde{\phi}_{i+1})<w(\phi_{i+1})=\psi_i-\psi_{i+1}\\
		\implies \tilde{\psi}_{i+1}&>\psi_{i+1}+\tilde{\psi}_i-\psi_i\geq\psi_{i+1}.
		\end{align*}
		\item If $n_{i+1}=-1$, it holds that $\phi_{i+1}<\tilde{\phi}_{i+1}$ from \eqref{eq:signs} and
		\begin{align*}	\tilde{\psi}_{i+1}-\tilde{\psi}_i&=w(\tilde{\phi}_{i+1})>w(\phi_{i+1})=\psi_{i+1}-\psi_i\\
		\implies  \tilde{\psi}_{i+1}&>\psi_{i+1}+\tilde{\psi}_i-\psi_i\geq\psi_{i+1}.
		\end{align*}
	\end{itemize}
	If the edge between node $i$ and $i+1$ is a compressor, we get $\tilde{\psi}_{i+1}=\alpha_{i+1}\tilde{\psi}_i\geq\alpha_{i+1}\psi_i=\psi_{i+1}$. Therefore, the claim $\tilde{\psi}_{i+1}\geq\psi_{i+1}$ holds for all cases. 
	
	The claim $\tilde{\psi}_{i}\geq\psi_{i}$ holds with equality only if all edges from node $0$ to $i$ are compressors. However, this is practically impossible for $i\geq 2$ since every compressor is modeled as an ideal compressor followed by a lossy pipe. Therefore $\tilde{\psi}_{i}>\psi_{i}$ for all $i\geq 2$. Applying the latter around the cycle gives $\tilde{\psi}_0>\psi_0$, which contradicts the hypothesis of fixed $\psi_0$.

\subsection{Proof of Theorem~\ref{th:gfexact}}
Let $(\bphi,\bpsi)$ be the unique solution to \eqref{eq:G1}, and $(\tbphi,\tbpsi)$ a minimizer of \eqref{eq:G2}. Proving by contradiction, suppose there exists an edge $\ell$ for which $\tilde{\phi}_\ell\neq\phi_\ell$. Since both flow vectors satisfy \eqref{eq:mc2}, their difference 
\begin{equation}\label{eq:bn}
\bn:=\tilde{\bphi}-\bphi
\end{equation}
must lie in $\nullspace(\bA^\top)$. The nullspace of $\bA^\top$ is spanned by the indicator vectors of all fundamental cycles in the network graph~\cite[Corollary~14.2.3]{GodsilRoyle}. Therefore, for all edges not belonging to a cycle, the corresponding entries of $\bn$ are zero. Then, the edge $\ell$ must belong to at least one cycle. In fact, by Assumption~\ref{as:A2}, edge $\ell$ belongs to a single cycle, which will be henceforth termed cycle $\mcC$. 

Based on cycle $\mcC$, the remainder of the proof is organized in three steps. The first step constructs a flow vector $\hbphi$ that satisfies constraints \eqref{eq:mc2} and \eqref{seq:compb}. The second step constructs a pressure vector $\hbpsi$ so that the pair $(\hbphi,\hbpsi)$ is feasible for \eqref{eq:G2}. The third step shows that $(\hbphi,\hbpsi)$ attains a smaller objective for \eqref{eq:G2}, thus contradicting the optimality of $(\tbphi,\tbpsi)$. 

Commencing with the first step, define the flow vector $\hat{\bphi}$
\begin{equation*}%\label{eq:hatphi}
\hat{\phi}_\ell:=\left\{\begin{array}{ll}
\phi_\ell &,~\ell\in \mcC\\
\tilde{\phi}_\ell &,~\text{otherwise}.
\end{array}\right.
\end{equation*}
The constructed flow vector $\hbphi$ differs from $\tbphi$ only on cycle $\mcC$. Due to \eqref{eq:bn}, we can write 
\begin{equation}\label{eq:t2}
\hbphi=\tbphi-\lambda\bn^\mcC
\end{equation}
for a $\lambda\neq0$ and where $\bn^\mcC$ is the indicator vector of cycle $\mcC$. Since $\tilde{\bphi}$ satisfies constraint \eqref{eq:mc2} and $\bA^\top\bn^\mcC=\bzero$, then
\[\bA^\top\hat{\bphi}=\bA^\top\tilde{\bphi}-\lambda \bA^\top\bn^\mcC=\bA^\top\tilde{\bphi}=\bq.\]
This proves that $\hbphi$ satisfies constraint \eqref{eq:mc2} as well. Granted there are no compressors in cycles (Assumption~\ref{as:A1}) and since $\tilde{\bphi}$ satisfies constraint \eqref{seq:compb}, then $\hat{\phi}_p=\tilde{\phi}_p\geq 0$ for all compressor edges $p\in\mcP_a$.

We proceed with the second step and construct the pressure vector $\hbpsi$. To define its $i$-th entry, we identify three cases depending on the location of node $i$ relative to cycle $\mcC$.

%\color{red}
%[I don't think these observations help the reader understand what is happening. The text in red should be modified or moved around.] In cycles, these edge-based constraints must agree to ensure zero pressure drop along the cycle.
%\begin{enumerate}
%	\item For an edge $\ell=(m,n)$ not belonging to any cycle, if $(\psi_m,\psi_n,\phi_\ell)$ satisfy \eqref{eq:weyr}, then the shifted pressures $(\psi_m+\delta,\psi_n+\delta,\phi_\ell)$ satisfy \eqref{eq:weyr} as well.
%	\item For a cycle $\mcC$ without compressors having nodes indexed as $\{0,\dots,k\}$, if $\left(\{\psi_{i}\}_{i=0}^k,\{\phi_\ell\}_{\ell\in \mcC}\right)$ satisfy \eqref{eq:weyr} and the net pressure drop along the cycle is zero, then a set of shifted pressures $\left(\{\psi_{i}+\delta\}_{i=0}^k,\{\phi_\ell\}_{\ell\in \mcC}\right)$ satisfy \eqref{eq:weyr} as well. The latter point yields also a zero net pressure drop along the cycle.
%\end{enumerate}

\emph{a) Node $i\notin \mcC$, and the shortest path between $i$ and $r$ has no edge in $\mcC$.} Define the modified pressure at node $i$ as 
\[\hat{\psi}_i:=\tilde{\psi}_i.\]

\emph{b) Node $i\in \mcC$.} Identify the node $i_\mcC\in \mcC$ with the shortest path to the reference node $r$. Define the modified pressure at node $i$ as
	\[\hat{\psi}_i:=\psi_i+(\tilde{\psi}_{i_\mcC}-\psi_{i_\mcC}).\]
The node $i_\mcC$ may be $i$ itself, implying $\hat{\psi}_{i_\mcC}=\tilde{\psi}_{i_\mcC}$. 

\emph{c) Node $i\notin \mcC$, but the shortest path between $i$ and $r$ has an edge in $\mcC$.} Identify the node  $k\in\mcC$ that is closest to node $i$, that is $d_{i-k}=\min_{j \in \mcC} d_{i-j}$. Note that the nodal pressures from $r$ to $k$ have been defined under cases \emph{a)} and \emph{b)}. Starting from node $k$ and its updated pressure $\hat{\psi}_k$, we next define the constructed pressures along the shortest path from $k$ to $i$ in a sequential fashion. Moving along the $k-i$ path, say the first node is $k+1$ and is connected to $k$ by edge $\ell=(k,k+1)$. Then, we define the pressure at node $k+1$ as
\begin{align*}%\label{eq:case-c}
\hat{\psi}_{k+1}:=\left\{\begin{array}{ll}
\alpha_\ell \hat{\psi}_k&,\text{ if } \ell\in\mcP_a\\
\hat{\psi}_k+(\tilde{\psi}_{k+1}-\tilde{\psi}_k)&,\text{ if }\ell\in\bmcP_a
\end{array}\right..
\end{align*}
The process is repeated until we reach node $i$. 

As an example, let us construct $\hbpsi$ for Fig.~\ref{fig:example}. Suppose cycle $\mcC_2$ is the cycle over which flows differ. Then, nodes $\{1,2\}$ fall under case \emph{a)}; nodes $\{3,4,5\}$ under case \emph{b)}; and nodes $\{6,7,8,910\}$ under \emph{c)}, with node $5$ acting as node $k$. Then, the constructed pressure vector is
\[\hbpsi=\begin{bmatrix}
\hat{\psi}_1\\
\hat{\psi}_2\\
\hat{\psi}_3\\
\hat{\psi}_4\\
\hat{\psi}_5\\
\hat{\psi}_6\\
\hat{\psi}_7\\
\hat{\psi}_8\\
\hat{\psi}_9\\
\hat{\psi}_{10}
\end{bmatrix}=
\begin{bmatrix}
\tilde{\psi}_1\\
\tilde{\psi}_2\\
\tilde{\psi}_3\\
\psi_4+(\tilde{\psi}_3-\psi_3)\\
\psi_5+(\tilde{\psi}_3-\psi_3)\\
\alpha_{5,6}\hat{\psi}_5\\
\hat{\psi}_6+(\tilde{\psi}_7-\tilde{\psi}_6)\\
\hat{\psi}_7+(\tilde{\psi}_8-\tilde{\psi}_7)\\
\hat{\psi}_8+(\tilde{\psi}_9-\tilde{\psi}_8)\\
\hat{\psi}_9+(\tilde{\psi}_{10}-\tilde{\psi}_9)
\end{bmatrix}.\]

We next show that the constructed $(\hbphi,\hbpsi)$ is feasible for \eqref{eq:G2}. From case \emph{b)}, it follows that for all edges $(m,n)\in \mcC$
\begin{equation}\label{eq:define}
\hat{\psi}_m-\hat{\psi}_n=\psi_m-\psi_n=a_{mn}\sign(\phi_{mn})\phi_{mn}^2.
\end{equation}
The latter implies that constraint \eqref{eq:weyr} is satisfied with equality for all $(m,n)\in \mcC$. Moreover, for all edges $(m,n)\notin\mcC$, cases \emph{b)} and \emph{c)} yield that \[\hat{\psi}_m-\hat{\psi}_n=\tilde{\psi}_m-\tilde{\psi}_n.\] 
Then, since $(\tilde{psi}_m,\tilde{psi}_n)$ satisfy \eqref{eq:weyr}, the same holds for $(\hat{psi}_m,\hat{psi}_n)$ for all $(m,n)\notin\mcC$. The previous two arguments show that $(\hat{\bphi},\hat{\bpsi})$ is feasible for \eqref{eq:G2}.

Continuing with the third step of this proof, note that the objective $r(\bpsi)$ sums up the absolute pressure differences along lossy pipes. Since by construction these differences have changed only along $\mcC$, we get that
\begin{equation}\label{eq:costdif}
r(\tilde{\boldsymbol{\psi}})-r(\hat{\boldsymbol{\psi}})=\sum_{(m,n)\in \mcC}|\tilde{\psi}_m-\tilde{\psi}_n|-|\hat{\psi}_m-\hat{\psi}_n|.
\end{equation}

As the pressure differences depend on flows, we next compare $\hbphi$ and $\tbphi$ using \eqref{eq:t2}. Since the edge directions are assigned arbitrarily, assume wlog that $\hat{\phi}_{mn}\geq 0$ for all $(m,n)\in \mcC$. Given $\bn^\mcC$ and \eqref{eq:t2}, one can find the value of $\lambda$. If $\lambda<0$, reverse the reference direction of cycle $\mcC$ to redefine its indicator vector as $-\bn^\mcC$, and get a positive $\lambda$. Then we then assume $\lambda>0$ wlog.

Recall that $\bn^\mcC\in\{0,\pm 1\}^P$. Define the set of edges in $\mcC$ corresponding to positive entries of $\bn^\mcC$ as $\mcP^+\subseteq \mcC$. Likewise, define the set of edges in $\mcC$ corresponding to negative entries of $\bn^\mcC$ as $\mcP^-\subseteq \mcC$. Take for example cycle $\mcC_1$ in Fig.~\ref{fig:example}: Its edges are grouped as $\mcP^+=\{(3,5),(4,3)\}$ and $\mcP^-=\{(5,4)\}$. From \eqref{eq:t2}, it follows that 
\begin{align*}
0\leq\hat{\phi}_\ell<\tilde{\phi}_\ell&,\quad \forall \ell\in\mcP^+;~\text{ and}\\
\tilde{\phi}_\ell<\hat{\phi}_\ell&,\quad \forall \ell\in\mcP^-.
\end{align*}

Summing up the pressure drops along cycle $\mcC$ for $\hat{\bpsi}$ should be zero. In Figure~\ref{fig:example} for example we have
\[(\hat{\psi}_3-\hat{\psi}_5)+(\hat{\psi}_5-\hat{\psi}_4)+(\hat{\psi}_4-\hat{\psi}_3)=0.\]
Since the pressure drop is positive along the edges in $\mcP^+$, and negative along the edges in $\mcP^-$, it holds that
\begin{align}\label{eq:sumhat}
&\sum_{(m,n)\in\mcP^+}(\hat{\psi}_m-\hat{\psi}_n)=\sum_{(m,n)\in\mcP^-}(\hat{\psi}_m-\hat{\psi}_n)\nonumber\\
\implies~&\sum_{(m,n)\in \mcC}|\hat{\psi}_m-\hat{\psi}_n|=2\sum_{(m,n)\in\mcP^+}(\hat{\psi}_m-\hat{\psi}_n)
\end{align}
where the absolute value is trivial since $\hat{\phi}_{mn}\geq0$ for all $(m,n)\in \mcC$. Referring to Figure~\ref{fig:example}, the equations in \eqref{eq:sumhat} imply $(\hat{\psi}_3-\hat{\psi}_5)+(\hat{\psi}_4-\hat{\psi}_3)=(\hat{\psi}_4-\hat{\psi}_5)$.

To draw similar relations on $\tilde{\boldsymbol{\psi}}$, define the set $\tilde{\mcP}^+\subseteq \mcC$ containing any edge $(m,n)\in \mcC$ such that the flow $\tilde{\phi}_{mn}$ is along the direction of $\bn^\mcC$. Similarly, define $\tilde{\mcP}^-:=\mcC\setminus\tilde{\mcP}^+$. Using the same argument as in \eqref{eq:sumhat} for $\tilde{\boldsymbol{\psi}}$, we obtain
\begin{align}\label{eq:sumtilde}
\sum_{(m,n)\in \mcC}|\tilde{\psi}_m-\tilde{\psi}_n|=2\sum_{(m,n)\in\tilde{\mcP}^+}(\tilde{\psi}_m-\tilde{\psi}_n).
\end{align}
Since the flows in $\hat{\bphi}$ for the edges in $\mcP^+$ are aligned with $\bn^\mcC$  and also $\tilde{\phi}_\ell>\hat{\phi}_\ell$ for these edges, it follows that $\mcP^+\subseteq\tilde{\mcP}^+$. Using this fact in \eqref{eq:sumtilde}, we get that
\begin{align}\label{eq:P+less}
2\sum_{(m,n)\in\mcP^+}(\tilde{\psi}_m-\tilde{\psi}_n)
&\leq 2\sum_{(m,n)\in\tilde{\mcP}^+}(\tilde{\psi}_m-\tilde{\psi}_n)\nonumber\\
&=\sum_{(m,n)\in \mcC}|\tilde{\psi}_m-\tilde{\psi}_n|.
\end{align}

% Using  for all $\ell\in\mcP^+$ provides
%Recall that the pressure drop across a pipe $\ell$ is increases with flow $\phi_\ell$; see \eqref{eq:wey2}. 
For every edge $\ell=(m,n)\in\mcP^+$, it holds
\begin{align}\label{eq:P+less2}
\hat{\psi}_m-\hat{\psi}_n&= a_\ell\hat{\phi}_\ell^2<  a_\ell\tilde{\phi}_\ell^2\leq  \tilde{\psi}_m-\tilde{\psi}_n
\end{align}
where the equality comes from the definition in \eqref{eq:define}; the first inequality in stems from $\tilde{\phi}_\ell>\hat{\phi}_\ell\geq0$; and the second inequality is from \eqref{eq:weyr}. Summing \eqref{eq:P+less2} over all $\ell\in\mcP^+$, and multiplying by $2$ gives
\begin{align*}
2\sum_{(m,n)\in\mcP^+}(\hat{\psi}_m-\hat{\psi}_n)&<2\sum_{(m,n)\in\mcP^+}(\tilde{\psi}_m-\tilde{\psi}_n)\\
\implies~ \sum_{(m,n)\in \mcC}|\hat{\psi}_m-\hat{\psi}_n|&<\sum_{(m,n)\in \mcC}|\tilde{\psi}_m-\tilde{\psi}_n|
\end{align*}
where the inequality stems from \eqref{eq:sumhat} and \eqref{eq:P+less}. From \eqref{eq:costdif}, this implies $r(\tbpsi)>r(\hbpsi)$ contradicting the optimality of $\tbpsi$.

%%%%%%%%%%%%%%%%%%%%%%%%%%%%%%%%%%%%%%%%%%%%%%%%%%%%
\balance
\bibliography{myabrv,water,gas}
\bibliographystyle{IEEEtran}
\end{document}